\providecommand{\keywords}[1]{\textbf{\textit{Keywords:}} #1}
\newtheorem{theorem}{Theorem}[section]
\newtheorem{lemme}{Lemma}[section]
\newtheorem{prop}{Proposition}[section]
\title{\textbf{\Huge{Directed Plateau Polyhypercubes}}}
\author{Abderrahim Arabi\\
\small USTHB, Faculty of Mathematics\\
\small RECITS Laboratory\\
\small BP 32, El Alia 16111, Bab Ezzouar\\
\small Algiers, Algeria\\
\small\tt rarabi@usthb.dz 
\and
Hacène Belbachir\\
\small USTHB, Faculty of Mathematics\\
\small RECITS Laboratory\\
\small BP 32, El Alia 16111, Bab Ezzouar\\
\small Algiers, Algeria\\
\small\tt hbelbachir@usthb.dz \\ 
\and
Jean-Philippe Dubernard\\
\small University of Rouen-Normandie, Faculty of Science and Technique\\ 
\small LITIS Laboratory\\
\small Avenue de l’université 76800 Saint-Étienne-du-Rouvray\\
\small Rouen, France\\
\small\tt jean-philippe.dubernard@univ-rouen.fr
}
\date{}
\begin{document}
\maketitle
\begin{abstract}
In this paper, we study a particular family of polyhypercubes in dimension  \(d\geq 3\),  the directed plateau polyhypercubes, according to to the width and a new parameter the lateral area. We give an explicit formula and we also propose an expression of the generating function in this case.

\end{abstract}
\keywords{Polyhypercube, polyomino, lateral area, enumeration, generating function.}
\section{Introduction}
In $\mathbb{Z}^2$, a polyomino is a finite union of cells (unit squares), connected
by their edges, without a cut point and defined up to a translation \cite{ref21}. Polyominoes appear in statistical physics, in the phenomenon of percolation \cite{ref12}. The number of polyominoes in general is still an open problem.\\
Polyhypercube are the extension of polyominoes in a dimension $d\geq 3$ \cite{ref4}. In $\mathbb{Z}^d$, a polyhypercube of dimension $d$ is a finite union of cells (unit hypercubes), connected by their hypercubes of dimension $d-1$, and defined up to translation \cite{ref1}. Polyhypercubes are also called $d$-polycubes. They are used in an efficient model for real time validation \cite{ref15} and in representation of finite geometric languages \cite{ref16}. There is no explicit formula for $A_d(n)$ the number of polyhypercubes in dimension $d$ with $n$ cells. Many algorithms were made and values are given for many dimensions (see \cite{ref4}\cite{ref13}).\\
Some families of polyhypercubes were enumerated for instance: the directed plateau, the plateau, the espalier and the pyramid polyhyercubes by the hypervolume and width using Dirichlet convolutions \cite{ref2}. Also, the generating function and asymptotic results were given for the \textit{rs}-directed \cite{ref3}, polyhypercubes that can be split into directed strata. Also for $d=3$, called polycubes,  the directed plateau and the plateau polycubes were enumerated according to the lateral area \cite{ABD}.\\
In this paper, we introduce a new parameter: the lateral area of a polyhypercube for a dimension $d\geq 3$. Using this parameter and the width, we enumerate the family of directed plateau polyhypercubes.

\section{Preliminaries}
Let $(0,\vec{i},\vec{j})$ be an orthonormal coordinate.  
The area of a polyomino is the number of its cells, its width is the number of its columns and its height is the number of its lines. A polyomino is column-convex if its intersection with any vertical line is connected. A North (resp. East) step is a movement of one unit in $\vec{i}$-direction (resp. $\vec{j}$-direction).
A polyomino is directed if from a distinguished cell of the polyomino called root, we reach any other cell by a path that uses only North or East steps.   

Let $(0, \vec{i_1}, \vec{i_2},..., \vec{i_d} )$ be an orthonormal coordinate system. The volume of polyhypercube is the number of its hypercubes. The width is the difference between its greatest index and its smallest index according to $\vec{i_1}$. 
We define the lateral area of an polyhypercube as the sum of the areas of the polyominoes obtained by its projection on the planes $(\vec{i_1}, \vec{i_l})$ with $2\leq l \leq d$. An elementary step is a positive move of one unit along the axis $\vec{i_j}$ with $1\leq j \leq d$. A polyhypercube is directed, if each cell can be reached from a distinguished cell called root, by a path only made by elementary steps.
An stratum is polyhypercube of width one. A plateau is an hyperrectangular stratum.
A directed plateau polyhypercube is polyhypercube whose strata are plateaus.\\
To avoid many steps of calculation we use the following useful convention for binomial coefficient, for $n\geq 0$,
$$
\binom{n}{k}=0 \: \:\text{for}\: \:k<0 \:\:\text{or}\:\: k>n.
$$
    
\section{Explicit enumeration of directed plateau polyhypercube} 
To enumerate directed plateau polyhypercubes, we characterize their projections.
\begin{theorem}\label{thproj}
For $k\geq 1$, $d\geq 3$ and $2\leq l \leq d$, the projection of a directed plateau polyhypercube of width $k$ on a plane $(\vec{i_1}, \vec{i_l})$ gives a directed column-convex polyomino of width $k$.
\end{theorem}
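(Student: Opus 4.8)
The plan is to argue directly from the slab structure of a directed plateau polyhypercube $P$ of width $k$ and to follow what happens under the orthogonal projection $\pi$ onto the plane $(\vec{i_1},\vec{i_l})$. First I would record the relevant structure. Since $P$ is connected by its $(d-1)$-faces and has width $k$, its strata $S_0,\dots,S_{k-1}$ occupy $k$ \emph{consecutive} indices $a,a+1,\dots,a+k-1$ along $\vec{i_1}$: a missing index would disconnect $P$, because two cells whose first coordinates differ by at least $2$ are never face-adjacent. Each $S_j$ is a plateau, hence $S_j=\{x_1=a+j\}\times R_j$ where $R_j=\prod_{m=2}^{d}[b_m^{(j)},c_m^{(j)}]$ is a box of cells. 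The projection of $S_j$ onto $(\vec{i_1},\vec{i_l})$ depends only on the $\vec{i_l}$-extent of $R_j$, so $\pi(S_j)$ is the one-cell-wide column $T_j=\{a+j\}\times[b_l^{(j)},c_l^{(j)}]$, which is nonempty; as the $T_j$ lie at pairwise distinct abscissae, the columns of $\pi(P)=\bigcup_{j}T_j$ are exactly $T_0,\dots,T_{k-1}$.

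It then remains to check four points: $\pi(P)$ is column-convex, has width $k$, is edge-connected with no cut point, and is directed. Column-convexity is immediate since each column $T_j$ is an interval of cells; and $\pi(P)$ spans exactly the same range of $\vec{i_1}$-coordinates as $P$ with all $k$ columns nonempty, so it has width $k$. For connectedness, the crucial observation is that consecutive strata must already be face-adjacent inside $P$: splitting the cells of $P$ into those with $x_1\le a+j$ and those with $x_1\ge a+j+1$ leaves both parts nonempty, so connectivity of $P$ forces a face-adjacency across the cut, and such an adjacency can only join a cell of $S_j$ to a cell of $S_{j+1}$ having the same coordinates $x_2,\dots,x_d$. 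Consequently $[b_m^{(j)},c_m^{(j)}]\cap[b_m^{(j+1)},c_m^{(j+1)}]\neq\emptyset$ for every $m$, in particular for $m=l$, so $T_j$ and $T_{j+1}$ share a vertical edge; hence $\pi(P)$ is edge-connected, and being a union of columns glued along edges it has no cut point.

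For directedness I would transport the root. Let $r$ be the root of $P$ and set $\bar r=\pi(r)$, a cell of $\pi(P)$. Given any cell $\bar c$ of $\pi(P)$, pick a cell $c$ of $P$ with $\pi(c)=\bar c$; directedness of $P$ yields a path from $r$ to $c$ made only of elementary $+\vec{i_m}$-steps, and applying $\pi$ erases the steps with $m\notin\{1,l\}$ while preserving the others, so after deleting repeated cells we obtain a monotone path from $\bar r$ to $\bar c$ using only $+\vec{i_1}$- and $+\vec{i_l}$-steps, every cell of which lies in $\pi(P)$. Thus $\pi(P)$ is directed with root $\bar r$, and the proof is complete.

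The step I expect to be the main obstacle is the connectivity argument of the second paragraph: one must be sure that the overlap of consecutive plateaus in the $\vec{i_l}$-direction is genuinely forced by the connectivity of $P$ (rather than tacitly assumed), and, relatedly, that the path obtained by projecting a monotone path of $P$ never leaves $\pi(P)$. Once the rectangular form of the strata is in hand, the rest is routine bookkeeping.
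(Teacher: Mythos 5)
Your proof is correct, but it is not the argument the paper gives; the two in fact establish complementary facts. You prove the theorem as literally stated: from the box structure of each plateau you show every stratum projects to a single interval-column, that connectivity of $P$ forces consecutive boxes to meet in every coordinate direction (so consecutive columns share an edge — this is the one point that genuinely needs the cut argument you supply), and that a monotone path in $P$ projects to a monotone path in $\pi(P)$; that is a complete verification that the projection is a directed column-convex polyomino with $k$ columns. The paper's proof does none of this. It instead argues that the map sending a directed plateau polyhypercube to its $(d-1)$-tuple of projections is \emph{injective} (two polyhypercubes with the same projection tuple would have to differ in some plateau, hence in the extent of that plateau along some $\vec{i_l}$, hence in some projection), while taking the shape of the projections entirely for granted. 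That injectivity is the property actually invoked in the proof of Theorem~\ref{T1}, where the number of polyhypercubes is identified with a product of polyomino counts — something your argument alone does not deliver, just as the paper's argument alone does not deliver the stated theorem. A fully rigorous treatment needs both halves, plus surjectivity of the tuple map onto all $(d-1)$-tuples of directed column-convex polyominoes of width $k$, which neither proof addresses; your half is precisely the one the paper's own proof omits.
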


\begin{proof}
We have the hypothesis that for each directed plateau polyhypercube we associate a $(d-1)$-tuple of polyominoes obtained by the projection of the polyhypercube on the planes $(\vec{i_1}, \vec{i_l})$ for $2\leq l\leq d$.\\
Let $A$ be a $(d-1)$-tuple of polyominoes and suppose that we can build two different polyhypercubes. It means that the polyhypercubes are different in at least one plateau
on $(\vec{i_1}, \vec{i_l})$, with $2\leq l \leq d$.\\
This implies that the two polycubes have different coordinates in $(\vec{i_1}, \vec{i_l})$ and their projections on these planes are different, it contradicts the initial hypothesis. 
\end{proof}
In order to enumerate the directed plateau polyhypercubes, we use the following lemma.
\begin{lemme}[\cite{ref9}]
\label{LL}
Let $c_{k,n}$ be the number of directed column-convex polyominoes having $k$ columns and area $n$. Then for $k\geq 1$ and $n\geq k$,
$$c_{k,n}=\binom{n+k-2}{n-k}.$$
\end{lemme}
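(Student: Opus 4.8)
The plan is to encode each directed column-convex polyomino by the heights of its columns together with the vertical offsets between consecutive columns, turn the count into a weighted sum over compositions of $n$, and then evaluate that sum. Fix the coordinate system $(0,\vec i,\vec j)$ and let $P$ be a directed column-convex polyomino with $k$ columns. By column-convexity the $i$-th column is a single vertical block of $h_i\ge 1$ cells; write $a_i$ for the height of its lowest cell, so the column occupies rows $a_i,a_i+1,\dots,a_i+h_i-1$. I would first observe that the root of $P$ must be the bottom cell of the first column: inside a column one can only move north, and one cannot enter column $1$ by an east step. Next, the bottom cell of column $i+1$ can be reached only by an east step from $(i,a_{i+1})$, which forces that cell to belong to column $i$; hence $a_i\le a_{i+1}\le a_i+h_i-1$, i.e. the offset $d_i:=a_{i+1}-a_i$ satisfies $0\le d_i\le h_i-1$ for each $i=1,\dots,k-1$. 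Conversely, if these inequalities hold, a short induction on $i$ (all of column $i$ reachable $\Rightarrow$ $(i,a_{i+1})$ reachable $\Rightarrow$ all of column $i+1$ reachable) shows $P$ is directed, and the same inequalities rule out diagonal pinches, so $P$ is a genuine polyomino. Thus, up to translation, $P$ is in bijection with pairs $\big((h_1,\dots,h_k),(d_1,\dots,d_{k-1})\big)$ satisfying $h_i\ge 1$, $\sum_{i=1}^k h_i=n$ and $0\le d_i\le h_i-1$.

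For fixed heights the number of admissible offset vectors is $h_1h_2\cdots h_{k-1}$, so
$$c_{k,n}=\sum_{\substack{h_1+\cdots+h_k=n\\ h_i\ge 1}}h_1h_2\cdots h_{k-1}.$$
I would finish this in one of two equivalent ways. Generating functions: since $\sum_{h\ge1}x^h=x/(1-x)$ and $\sum_{h\ge1}hx^h=x/(1-x)^2$, one gets $\sum_{n}c_{k,n}x^n=\dfrac{x}{1-x}\Big(\dfrac{x}{(1-x)^2}\Big)^{k-1}=\dfrac{x^k}{(1-x)^{2k-1}}$, and the negative binomial series gives the coefficient of $x^n$ as $\binom{(n-k)+(2k-1)-1}{n-k}=\binom{n+k-2}{n-k}$. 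Alternatively, and perhaps more transparently, introduce slack variables $e_i:=h_i-1-d_i\ge 0$ for $i\le k-1$ and $g:=h_k-1\ge 0$; then $d_1,e_1,\dots,d_{k-1},e_{k-1},g$ are $2k-1$ nonnegative integers with $d_1+e_1+\cdots+d_{k-1}+e_{k-1}+g=n-k$, and stars and bars counts these as $\binom{(n-k)+(2k-1)-1}{(2k-1)-1}=\binom{n+k-2}{2k-2}=\binom{n+k-2}{n-k}$.

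The step I expect to be the real work is the bijective characterization in the first paragraph: one has to pin down precisely that directedness together with column-convexity is equivalent to the single family of inequalities $0\le d_i\le h_i-1$ — neither more nor less — and to check that the height/offset encoding is genuinely one-to-one modulo translation and lands exactly on legitimate polyominoes (connected, without cut point). Once that is established, the remaining steps are routine bookkeeping, and the convention $\binom{n}{k}=0$ outside $0\le k\le n$ takes care of the degenerate range $n<k$ automatically.
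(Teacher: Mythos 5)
Your proof is correct. Note, however, that the paper does not prove this lemma at all: it is quoted verbatim from Delest and Dulucq \cite{ref9}, so there is no internal argument to compare yours against. What you have supplied is a complete, self-contained, elementary derivation, and all the delicate points are handled properly: the root is forced to be the bottom cell of the first column, the reachability of the bottom cell of column $i+1$ forces exactly the inequalities $0\le d_i\le h_i-1$ (and conversely these inequalities guarantee edge-connectivity and directedness), and the resulting encoding is a genuine bijection modulo translation. The weighted-composition identity $c_{k,n}=\sum h_1\cdots h_{k-1}$ and its evaluation by stars and bars (or by the generating function $x^k/(1-x)^{2k-1}$) are both correct; the generating-function route has the added benefit of independently re-deriving the identity of Barcucci \emph{et al.} that the paper invokes later in the proof of Proposition \ref{pr}, so your argument would make the paper self-contained on both counts. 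The one cosmetic gap is that you assert rather than verify that the encoded objects have no cut point in the sense of the paper's definition of a polyomino, but since consecutive columns always share at least the edge at row $a_{i+1}$ and each column is an interval, this is immediate.
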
 
\begin{theorem}
\label{T1}
Let $p_{d,k,n}$ be the number of directed plateau polyhypercubes of dimension $d$, of width $k$ and having a lateral area $n$. Then for $d\geq 3$ and $n \geq (d-1)k$,
$$p_{d,k,n}=\sum_{j_2+j_3+...+j_d=n} \prod_{l=2}^{d}\binom{j_l+k-2}{j_l-k}.$$


\end{theorem}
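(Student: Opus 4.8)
The plan is to show that the projection operation gives a bijection between directed plateau polyhypercubes of width $k$ and $(d-1)$-tuples $(P_2,\dots,P_d)$ of directed column-convex polyominoes, each of width $k$, and then to enumerate the latter. By Theorem \ref{thproj}, projecting a directed plateau polyhypercube of width $k$ onto the plane $(\vec{i_1},\vec{i_l})$ yields a directed column-convex polyomino of width $k$, so the map $\Phi\colon C\mapsto(\pi_2(C),\dots,\pi_d(C))$ is well defined; and the argument in the proof of Theorem \ref{thproj} shows $\Phi$ is injective, since two polyhypercubes that differ in some stratum already differ in one of their projections.

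For the converse I would reconstruct $C$ from its tuple of projections. Given directed column-convex polyominoes $P_2,\dots,P_d$ of common width $k$, write the $i$-th column of $P_l$ (for $1\le i\le k$) as an integer interval $[a_{i,l},b_{i,l}]$ along $\vec{i_l}$, which is legitimate precisely because $P_l$ is column-convex, and define the $i$-th stratum of $C$ to be the hyperrectangle $\{i\}\times\prod_{l=2}^{d}[a_{i,l},b_{i,l}]$. Each stratum is then a plateau by construction, so it remains to check that $C$ is a connected directed polyhypercube: connectivity of strata $i$ and $i+1$ follows from the fact that consecutive columns of a column-convex polyomino share an edge, hence $[a_{i,l},b_{i,l}]\cap[a_{i+1,l},b_{i+1,l}]\neq\emptyset$ for every $l$; directedness follows by choosing as root the cell with $\vec{i_l}$-coordinate $a_{1,l}$ for each $l$ and using the directedness of each $P_l$ to build, coordinate by coordinate, a path of elementary steps to any target cell. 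Checking that this reconstruction inverts $\Phi$ is then immediate. This verification is the step I expect to require the most care, since one must be sure that the directedness of the individual planar projections really does assemble into a single directedness condition for $C$.

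With the bijection established, the count is a routine decomposition. A directed plateau polyhypercube of width $k$ and lateral area $n$ corresponds to a tuple $(P_2,\dots,P_d)$ with $\sum_{l=2}^{d}\mathrm{area}(P_l)=n$; grouping such tuples according to the vector of areas $(j_2,\dots,j_d)$ with $j_2+\dots+j_d=n$, the number of admissible $P_l$ of width $k$ and area $j_l$ equals $c_{k,j_l}=\binom{j_l+k-2}{j_l-k}$ by Lemma \ref{LL}. Summing the products over all such vectors gives
$$
p_{d,k,n}=\sum_{j_2+\dots+j_d=n}\ \prod_{l=2}^{d}\binom{j_l+k-2}{j_l-k}.
$$
Finally, a width-$k$ directed column-convex polyomino has area at least $k$, so each $j_l\ge k$ and the sum is empty unless $n\ge (d-1)k$; moreover the binomial convention fixed in the Preliminaries kills every term with some $j_l<k$, so the summation range needs no further restriction.
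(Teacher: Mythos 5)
Your proof is correct and follows essentially the same route as the paper: decompose a directed plateau polyhypercube into its $(d-1)$ projections, count each projection of width $k$ and area $j_l$ by Lemma \ref{LL}, and sum the products over the compositions $j_2+\dots+j_d=n$. The only substantive difference is that you explicitly reconstruct the polyhypercube from an arbitrary tuple of directed column-convex polyominoes (i.e., you verify surjectivity of the projection map), a step the paper leaves implicit since Theorem \ref{thproj} and its proof only address well-definedness and injectivity --- so your write-up is, if anything, more complete than the original.
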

\begin{proof}
If a polyhypercube of dimension $d$ has a width $k$ and a lateral area $n$, then from Theorem \ref{thproj} each of its projection on a plane $(\vec{i_1}, \vec{i_l})$ gives a polyomino of width $k$ and area $j_l$, with $j_l\geq k$, for $l$ such that $2 \leq l \leq d$. And the sum of the areas of all polyominoes obtained from projections is equal to $n$.\\
From Lemma \ref{LL}, it is known that the number of column-convex 
polyominoes having $k$ columns and area $j_l$ is equal to $\binom{j_l+k-2}{j_l-k}$.
Therefore the number of directed plateau polyhypercubes of dimension $d$, width $k$ and whose projections on the planes $(\vec{i_1},\vec{i_l})$ give a polyomino of area $j_l$ is
$\prod_{l=2}^{d}\binom{j_l+k-2}{j_l-k}$, with $2 \leq l \leq d$. Therefore, the formula is obtained by summing for all values of $j_l$, $2\leq l \leq d$.
\end{proof}
\begin{lemme}[\cite{ref23}]
\label{L}
For $x$, $y$, $n$ and $k$ integers,
$$\sum_{k=0}^{n}\binom{x+k}{k}\binom{y+n-k}{n-k}=\binom{x+y+n+1}{n}.\\$$
\end{lemme}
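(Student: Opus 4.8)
The plan is to prove Lemma \ref{L} by generating functions, viewing both factors on the left as coefficients of negative-binomial series. Recall the elementary expansion, valid for every nonnegative integer $m$,
\[
\frac{1}{(1-z)^{m+1}}=\sum_{k\ge 0}\binom{m+k}{k}z^{k},
\]
obtained by differentiating the geometric series $m$ times (or directly by Taylor's formula). First I would apply this with $m=x$ and with $m=y$, so that the left-hand side of the claimed identity is precisely the coefficient of $z^{n}$ in the Cauchy product
\[
\frac{1}{(1-z)^{x+1}}\cdot\frac{1}{(1-z)^{y+1}},
\]
since the $z^{n}$-coefficient of a product of two power series is the convolution $\sum_{k=0}^{n}[z^{k}]\cdot[z^{n-k}]$ of their coefficients.

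Next I would simplify that product: it equals $(1-z)^{-(x+y+2)}$, and applying the same expansion once more, now with $m=x+y+1$, shows that its $z^{n}$-coefficient is $\binom{x+y+n+1}{n}$. Comparing the two expressions for $[z^{n}]$ yields the identity. Everything takes place in the ring of formal power series with nonnegative exponents, so no convergence question arises and the argument is purely algebraic. If a self-contained combinatorial proof is preferred, one can instead induct on $n$, applying Pascal's rule to $\binom{y+n-k}{n-k}$ and re-indexing the resulting two sums; or argue bijectively, noting that $\binom{x+y+n+1}{n}$ counts size-$n$ multisets drawn from $x+y+2$ labelled types, so conditioning on how many chosen elements come from the first $x+1$ types versus the last $y+1$ types reproduces the left-hand sum.

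The proof has essentially no mathematical obstacle; the only care needed is bookkeeping — keeping straight the shift between the exponent $m+1$ in the denominator and the index $m+k$ upstairs in the negative-binomial expansion, and checking that the convention $\binom{n}{k}=0$ for $k<0$ or $k>n$ fixed in the Preliminaries is compatible with the summation range $0\le k\le n$ (it is, since all entries that occur are genuine binomial coefficients when $x,y,n\ge 0$). I would present the generating-function line as the main proof, as it is the shortest and most transparent.
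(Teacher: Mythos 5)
Your generating-function argument is correct, and it is worth noting that the paper itself offers no proof of this lemma at all: it is simply quoted from Gould's tables of binomial identities, so you are supplying a derivation where the authors rely on a citation. The computation checks out: $(1-z)^{-(x+1)}(1-z)^{-(y+1)}=(1-z)^{-(x+y+2)}$, and extracting $[z^n]$ from both sides via $\frac{1}{(1-z)^{m+1}}=\sum_{k\ge 0}\binom{m+k}{k}z^k$ gives exactly the stated convolution; this is the standard proof of the Chu--Vandermonde identity in its ``upper-index'' form. One small caution: the lemma is stated for arbitrary integers $x$ and $y$, whereas your formal-power-series expansion directly establishes it only for nonnegative $x$ and $y$ (for negative upper parameters one must either invoke the polynomial-identity argument --- both sides are polynomials in $x$ and $y$ agreeing on infinitely many integer points --- or use the generalized binomial coefficient). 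This does not affect the paper's application, since in the proof of Theorem 3.3 the lemma is invoked with $a=2k-2\ge 0$ and $b=2(d-1)k-d\ge d-2\ge 1$, so your proof fully covers the case that is actually used.
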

\noindent
For more properties on Vandermonde's convolutions see \cite{Bel15}. 
\begin{theorem}\label{T2}
For $d\geq 3$ and $n \geq (d-1)k$,
$$p_{d,k,n}=\binom{n+(d-1)k-d}{n-(d-1)k}.$$
\end{theorem}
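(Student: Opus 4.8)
The plan is to derive the closed form from the multi-sum of Theorem \ref{T1} by induction on the dimension $d$, peeling off one summation variable at a time with the convolution identity of Lemma \ref{L}.

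First I would put the sum in a cleaner shape. Since $\binom{j_l+k-2}{j_l-k}=0$ as soon as $j_l<k$ (by the convention $\binom{n}{k}=0$ for $k<0$ adopted above), only tuples with all $j_l\ge k$ contribute; substituting $j_l=k+a_l$ with $a_l\ge 0$ turns the constraint $j_2+\cdots+j_d=n$ into $a_2+\cdots+a_d=n-(d-1)k$ and each factor into $\binom{2k-2+a_l}{a_l}$, so that
$$p_{d,k,n}=\sum_{a_2+\cdots+a_d=n-(d-1)k}\ \prod_{l=2}^{d}\binom{2k-2+a_l}{a_l},$$
the hypothesis $n\ge(d-1)k$ ensuring the index set is non-empty.

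For the base case $d=3$ the sum reads $\sum_{a_2+a_3=n-2k}\binom{2k-2+a_2}{a_2}\binom{2k-2+a_3}{a_3}$, and Lemma \ref{L} with $x=y=2k-2$ and summation length $n-2k$ gives $\binom{(2k-2)+(2k-2)+(n-2k)+1}{n-2k}=\binom{n+2k-3}{n-2k}$, the claimed value. For the inductive step, assume the formula in dimension $d$ and isolate the last index $j_{d+1}=k+b$ in the dimension-$(d+1)$ sum: the remaining sum over $j_2+\cdots+j_d=n-k-b$ is $p_{d,k,n-k-b}$, so $p_{d+1,k,n}=\sum_{b\ge 0}\binom{2k-2+b}{b}\,p_{d,k,n-k-b}$. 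Replacing $p_{d,k,n-k-b}$ by $\binom{(n-k-b)+(d-1)k-d}{(n-k-b)-(d-1)k}$ and setting $a=n-dk-b$ rewrites this as $\sum_{a+b=n-dk}\binom{2(d-1)k-d+a}{a}\binom{2k-2+b}{b}$; one more application of Lemma \ref{L}, now with $x=2(d-1)k-d$ and $y=2k-2$, collapses it to $\binom{x+y+(n-dk)+1}{n-dk}=\binom{n+dk-d-1}{n-dk}$, which is exactly the asserted formula for dimension $d+1$.

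The only delicate point is bookkeeping rather than ideas: one must check that the shift $j_l\mapsto k+a_l$ loses no boundary term (guaranteed by the binomial convention), that the induction hypothesis is invoked only in its stated range $n-k-b\ge(d-1)k$, and that the upper index $2(d-1)k-d+2k-2+(n-dk)+1$ genuinely simplifies to $n+dk-d-1$. I expect no real obstacle here; the essential observation is simply that each collapse is the correct instance of Vandermonde's convolution. Alternatively one could skip the induction and apply Lemma \ref{L} $d-2$ times directly to the multi-sum of Theorem \ref{T1}, but the inductive presentation keeps the growing list of indices manageable.
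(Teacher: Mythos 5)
Your proof is correct and follows essentially the same route as the paper: induction on $d$, peeling off the last index $j_{d+1}$ and collapsing the resulting convolution with Lemma \ref{L} (the substitutions $i=j_{d+1}-k$, $m=n-dk$, $a=2k-2$, $b=2(d-1)k-d$ are exactly those in the paper). The only difference is that you establish the base case $d=3$ directly from Theorem \ref{T1} and Lemma \ref{L}, whereas the paper cites the polycube result of \cite{ABD}; your version is slightly more self-contained but otherwise identical.
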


\begin{proof}
Let us prove the result by induction. In dimension $3$, according to \cite{ABD}, the number of directed plateau polycubes of width $k$ and having a lateral area equal to $n$ is equal to,
$$\binom{n+2k-3}{n-2k}.$$
Here, this result corresponds to the case of $d=3$.
Let us now suppose that, for a given $d$
$$p_{d,k,n}=\binom{n+(d-1)k-d}{n-(d-1)k},$$
and let us prove that 
$$p_{d+1,k,n}=\binom{n+dk-d-1}{n-dk}.$$
From Theorem \ref{T1}
\begin{align*}
p_{d+1,k,n}&=\sum_{j_2+j_3+...+j_d=n}\prod_{l=2}^{d+1}\binom{j_l+k-2}{j_l-k} \\ \\
&=\sum_{j_{d+1}=k}^{n-(d-1)k}\binom{j_{d+1}+k-2}{j_{d+1}-k}\sum_{j_2+j_3+...+j_{d}=n-j_{d+1}}\prod_{l=2}^{d}\binom{j_l+k-2}{j_l-k}\\ \\
& =\sum_{j_{d+1}=k}^{n-(d-1)k}\binom{j_{d+1}+k-2}{j_{d+1}-k}\binom{n-j_{d+1}+(d-1)k-d}{n-j_{d+1}-(d-1)k}.
\end{align*}

Setting $i=j_{d+1}-k$, $m=n-dk$, $a=2k-2$ and $b=2(d-1)k-d$, we obtain
$$\sum_{i=0}^{m}\binom{i+a}{i}\binom{m-i+b}{m-i}.$$
Using Lemma \ref{L} and replacing $m$, $a$ and $b$ by their values we get the formula.

\end{proof}
\section{Generating functions}
Let $P_{d,k}(t)$ be the generating function of the directed plateaus polyhypercubes of dimension $d$ and width $k$ according to the lateral area.
$$P_{d,k}(t)=\sum_{n\geq 1} p_{d,k,n}t^n.$$ 
\noindent

\begin{prop}\label{pr}
For $k\geq 1$, we have
$$P_{d,k}(t)=\frac{t^{k(d-1)}}{(1-t)^{2k(d-1)-(d-1)}}. $$
\end{prop}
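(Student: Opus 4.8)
The plan is to start from the closed form for $p_{d,k,n}$ established in Theorem \ref{T2} and recognize the resulting power series as a shifted negative binomial series. Concretely, since $p_{d,k,n}=0$ for $n<(d-1)k$ (by the convention on binomial coefficients adopted in the Preliminaries, together with the support condition $n\geq (d-1)k$ in Theorem \ref{T2}), I would write
$$P_{d,k}(t)=\sum_{n\geq (d-1)k}\binom{n+(d-1)k-d}{n-(d-1)k}t^n.$$

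Next I would perform the index shift $m=n-(d-1)k$, i.e. $n=m+(d-1)k$, which turns the sum into
$$P_{d,k}(t)=t^{(d-1)k}\sum_{m\geq 0}\binom{m+2(d-1)k-d}{m}t^{m}.$$
Then I would invoke the standard generating-function identity $\sum_{m\geq 0}\binom{m+r}{m}t^{m}=(1-t)^{-(r+1)}$ (the negative binomial series, valid here since $r=2(d-1)k-d\geq 0$ for $d\geq 3$, $k\geq 1$), which is itself an immediate consequence of Lemma \ref{L} or of the generalized binomial theorem. Setting $r=2(d-1)k-d$ gives exponent $r+1=2k(d-1)-(d-1)$, hence
$$P_{d,k}(t)=\frac{t^{k(d-1)}}{(1-t)^{2k(d-1)-(d-1)}},$$
which is the claimed expression.

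There is essentially no hard step here: the only thing to be careful about is the bookkeeping of the two indices $(d-1)k$ appearing in the formula (the one coming from the lower limit of summation and the one inside the binomial coefficient), so that the shift collapses $n+(d-1)k-d$ and $n-(d-1)k$ correctly into $m+2(d-1)k-d$ and $m$. A secondary point worth a one-line remark is justifying that the lower summation bound can be taken as $n=(d-1)k$ rather than $n\geq 1$: for $1\leq n<(d-1)k$ the upper entry $n+(d-1)k-d$ may still be nonnegative, but the lower entry $n-(d-1)k$ is negative, so the binomial vanishes by convention and those terms contribute nothing. Once that is noted, the computation is a direct substitution followed by the negative binomial expansion.
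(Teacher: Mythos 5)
Your proof is correct, but it takes a different route from the paper's. You start from the closed form $p_{d,k,n}=\binom{n+(d-1)k-d}{n-(d-1)k}$ of Theorem \ref{T2}, shift the index, and sum a single negative binomial series; the bookkeeping checks out, since $r=2(d-1)k-d\geq 0$ and $r+1=2k(d-1)-(d-1)$ as you say, and your remark about the vanishing of terms with $n<(d-1)k$ correctly handles the lower summation bound. The paper instead works from the unsummed expression of Theorem \ref{T1}: it factors the sum over compositions $j_2+\cdots+j_d=n$ into a product of $d-1$ identical one-variable series, and evaluates each factor by the Barcucci--Pinzani--Sprugnoli identity $\sum_{n\geq 0}\binom{n+k-2}{n-k}t^n=t^k/(1-t)^{2k-1}$, so that $P_{d,k}(t)=\bigl(t^k/(1-t)^{2k-1}\bigr)^{d-1}$. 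The practical difference is one of logical dependency: the paper's argument does not use Theorem \ref{T2} at all (and indeed gives an independent second derivation of that explicit formula, by extracting coefficients from the product), whereas yours leans on Theorem \ref{T2} and hence on its inductive Vandermonde proof. Your route is shorter given what has already been established; the paper's is self-contained at the level of the projection decomposition and makes the product structure of the family visible in the generating function itself.
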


\begin{proof}
Using Theorem\ref{T1}, we get
$$P_{d,k}(t)=\sum_{n\geq 0}\sum_{j_2+j_3+...+j_d=n}\prod_{l=2}^{d}\binom{j_l+k-2}{j_l-k}t^n.$$
For $l$ such that $2\leq l \leq d$, if $j_l < k$ or $j_l > n-(d-1)k$ then $\prod_{l=2}^{d}\binom{j_l+k-2}{j_l-k}=0$. Therefore,

\begin{align*}
P_{d,k}(t)&=\sum_{n\geq 0}\sum_{j_2+j_3+...+j_d=n}\prod_{l=2}^{d}\binom{j_l+k-2}{j_l-k}t^n\\
&=\bigg(\sum_{j_2\geq 0}\binom{j_2+k-2}{j_2-k}t^{j_2}\bigg)\bigg(\sum_{j_3\geq 0}\binom{j_3+k-2}{j_3-k}t^{j_3}\bigg)\cdots \bigg(\sum_{j_d\geq 0}\binom{j_d+k-2}{j_d-k}t^{j_d}\bigg).
\end{align*}
It is know from Barcucci \textit{et al.} \cite{ba93}, that
$$\sum_{n\geq 0}\binom{n+k-2}{n-k}t^n=\frac{t^k}{(1-t)^{2k-1}},$$
thus we get the result. 
\end{proof}
\noindent
Let $$P_d(t,x):=\sum_{k\geq 1}P_{d,k}(t)x^k,$$ be the generating function of directed plateau polyhypercubes according to the width (coded by $x$) and the lateral area (coded by $t$). \\
From Proposition \ref{pr}, then
\begin{equation}\label{a}
P_d(t,x)=\frac{x t^{d-1}(1-t)^{d-1}}{(1-t)^{2(d-1)}-xt^{d-1}}.
\end{equation}

From this expression we deduce the following theorem.
\begin{theorem}
Let $P_d(t)$ be the generating function of directed plateau polyhypercubes according to the lateral area. Then for $d\geq 3$,
$$P_d(t)=\frac{t^{d-1}(1-t)^{d-1}}{(1-t)^{2(d-1)}-t^{d-1}}.$$
\end{theorem}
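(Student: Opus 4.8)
The plan is to obtain $P_d(t)$ as the specialization of the bivariate generating function $P_d(t,x)$ at $x=1$: forgetting the width simply means setting the width-marking variable to $1$, so that the refined count $p_{d,k,n}$ is summed over all admissible widths $k$. Concretely, by definition of $P_d(t)$ the coefficient of $t^n$ is the total number of directed plateau polyhypercubes of dimension $d$ with lateral area $n$, and a directed plateau polyhypercube of lateral area $n$ has some width $k\geq 1$, so
$$P_d(t)=\sum_{n\geq 1}\Big(\sum_{k\geq 1}p_{d,k,n}\Big)t^n=\sum_{k\geq 1}\sum_{n\geq 1}p_{d,k,n}t^n=\sum_{k\geq 1}P_{d,k}(t).$$

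First I would justify the interchange of summations and the legitimacy of reading this off $P_d(t,x)$ at $x=1$. From Proposition \ref{pr}, $P_{d,k}(t)=t^{k(d-1)}/(1-t)^{2k(d-1)-(d-1)}$ is a formal power series whose smallest exponent is $k(d-1)$; since $k(d-1)\to\infty$ with $k$, only finitely many values of $k$ contribute to the coefficient of any fixed power $t^n$. Hence $\sum_{k\geq 1}P_{d,k}(t)$ is a well-defined element of $\mathbb{Z}[[t]]$ and coincides coefficientwise with the value at $x=1$ of $P_d(t,x)=\sum_{k\geq 1}P_{d,k}(t)x^k$.

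Next I would evaluate the closed form. Using expression $(\ref{a})$,
$$P_d(t,x)=\frac{x\,t^{d-1}(1-t)^{d-1}}{(1-t)^{2(d-1)}-x\,t^{d-1}},$$
and substituting $x=1$ is permissible because the denominator $(1-t)^{2(d-1)}-x\,t^{d-1}$ has constant term $1$ at $t=0$, so the rational function remains a well-defined power series in $t$ and its $x=1$ value is exactly $\sum_{k\geq 1}P_{d,k}(t)$. This gives directly
$$P_d(t)=\frac{t^{d-1}(1-t)^{d-1}}{(1-t)^{2(d-1)}-t^{d-1}},$$
which is the asserted formula. The only genuine obstacle is the formal-power-series bookkeeping in the previous paragraph: one must make sure the family $(P_{d,k})_{k\geq 1}$ is summable and that the $x=1$ substitution is compatible with the coefficientwise meaning of $(\ref{a})$. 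Both are handled by the valuation estimate $\mathrm{val}_t(P_{d,k})=k(d-1)$; alternatively one can expand $\big((1-t)^{2(d-1)}-x\,t^{d-1}\big)^{-1}$ as a geometric series in $x\,t^{d-1}/(1-t)^{2(d-1)}$, which recovers $[x^k]P_d(t,x)=P_{d,k}(t)$ and makes the specialization transparent.
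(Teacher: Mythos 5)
Your proposal is correct and follows the paper's own (one-line) argument: set $x=1$ in the closed form of $P_d(t,x)$ from equation (\ref{a}). The additional valuation/summability justification you supply is sound but not needed beyond what the paper already does.
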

\begin{proof}
We set $x=1$ in equation \ref{a}.
\end{proof}

\end{document}